\def\bfm#1{\boldsymbol{#1}}
\def\NN{\mathbb{N}}
\def\RR{\mathbb{R}}
\def\ZZ{\mathbb{Z}}
\def\bfm#1{\boldsymbol{#1}}
\journal{Applied Mathematics and Computation}
\newtheorem{thm}{Theorem}
\newtheorem{lem}[thm]{Lemma}
\newtheorem{conj}[thm]{Conjecture}
\newtheorem{prop}[thm]{Proposition}
\newdefinition{rmk}{Remark}
\newdefinition{dsc}[rmk]{Discussion}
\begin{document}

\begin{frontmatter}

%% Title, authors and addresses

%% use the tnoteref command within \title for footnotes;
%% use the tnotetext command for theassociated footnote;
%% use the fnref command within \author or \address for footnotes;
%% use the fntext command for theassociated footnote;
%% use the corref command within \author for corresponding author footnotes;
%% use the cortext command for theassociated footnote;
%% use the ead command for the email address,
%% and the form \ead[url] for the home page:
%% \title{Title\tnoteref{label1}}
%% \tnotetext[label1]{}
%% \author{Name\corref{cor1}\fnref{label2}}
%% \ead{email address}
%% \ead[url]{home page}
%% \fntext[label2]{}
%% \cortext[cor1]{}
%% \address{Address\fnref{label3}}
%% \fntext[label3]{}

\title{On positivity of principal minors of bivariate B\'{e}zier collocation matrix}

%% use optional labels to link authors explicitly to addresses:
%% \author[label1,label2]{}
%% \address[label1]{}
%% \address[label2]{}

\author[jaklic]{Ga\v{s}per Jakli\v{c}}
\author[kanduc]{Tadej Kandu\v{c}\corref{cor}}
\address[jaklic]{FMF and IMFM, University of Ljubljana and PINT, University of Primorska, Jadranska 19, Ljubljana, Slovenia}
\address[kanduc]{Turboin\v{s}titut d.d., Rov\v{s}nikova 7, Ljubljana, Slovenia}
\cortext[cor]{Corresponding author.}
\ead{kanduc.tadej@gmail.com}
\date{August 1, 2013}

\begin{abstract}
It is well known that the bivariate polynomial interpolation problem at 
uniformly distributed domain points of a triangle is correct.
Thus the corresponding interpolation matrix $M$ is nonsingular. 
L.\,L. Schumaker stated the conjecture %, that the determinant of $M$ is positive.
that all principal submatrices of $M$ are nonsingular too.
%Furthermore, all its principal minors are conjectured to 
%be positive, too. 
Furthermore, all of the corresponding determinants (the principal minors) are conjectured to be positive.
This result would solve the constrained interpolation problem.
In this paper, %the basic conjecture for the matrix $M$, 
the conjecture on 
minors for polynomial degree $\le 17$ and conjecture for some particular 
configurations of domain points are confirmed.
\end{abstract}

\begin{keyword}
%% keywords here, in the form: keyword \sep keyword
collocation matrix \sep principal minor \sep bivariate Bernstein polynomial \sep positivity.
%% PACS codes here, in the form: \PACS code \sep code

%% MSC codes here, in the form: \MSC code \sep code
%% or \MSC[2008] code \sep code (2000 is the default)
\MSC 65D05 \sep 65F40.
\end{keyword}
  
\end{frontmatter}

%% \linenumbers

%% main text
\section{Introduction}

Positivity of determinants (or minors) of collocation matrices is important in approximation theory. Nonsingularity of a collocation matrix
implies existence and uniqueness of the solution of the associated interpolation problem. Positivity of principal minors or even total positivity is 
used in the proofs of some well-known results, see \cite{deBoor,TotalP,totalPositivity}, e.g.

Recently, nonsingularity and principal minors of collocation matrices for bivariate polynomial interpolation at Padua-like points and for interpolation by triangular 
B\'ezier patches were studied in \cite{DetConj} and \cite{Sfot}. A related problem is a construction or an approximation of Fekete points for a given
domain, i.e., the interpolation points, which yield the maximal absolute value of the Vandermonde determinant \cite{FeketePoints}.

It is well-known that bivariate \emph{Bernstein polynomials} $\{B_{ijk}^d\}_{i+j+k=d}$ of degree $d$ form a basis of the space of bivariate polynomials of degree $\leq d$. Let $\mathcal{I}_d=\{ (i,j,k):\; i,j,k \in \NN \cup \{0\},\, i+j+k=d  \}$.
For every $\binom{d+2}{2}$ points in a domain, which do not lie on an algebraic hyper-surface of degree $\leq d$, the corresponding interpolation problem is correct \cite{Lattices}. In particular, the problem is correct for uniformly distributed \emph{domain points}
\begin{align*}
\mathcal{D}_{d,T}:=\{(i/d,j/d,k/d):\; (i,j,k) \in \mathcal{I}_d \},
\end{align*} 
expressed in barycentric coordinates with respect to a triangle $T$. 
Thus the corresponding interpolation matrix $M:=[B_\eta^d(\xi)]_{\xi \in \mathcal D_{d,T},\,\eta \in \mathcal{I}_d}$ is nonsingular. 
In \cite{conj}, a theorem was stated that by choosing an arbitrary nonempty subset $\mathcal{J}\subset\mathcal D_{d,T}$ and the corresponding set of indices $\Gamma$, the submatrix $M_\Gamma:=[B_\eta^d(\xi)]_{\xi \in \mathcal{J},\,\eta \in \Gamma}$ is nonsingular for all $d\leq 7$, and furthermore, $\det M_\Gamma>0$. The authors of \cite{conj} verified the theorem by computer only. %and provided a proof of nonsingularity of principal matrices for some special configurations of domain points.
%For more details see \cite{Sfot}.
In \cite{Sfot}, a proof of nonsingularity of principal matrices of the collocation matrix for some special configurations of domain points is provided.

%In this paper, we will tackle the conjecture.
A straightforward way of verifying the conjecture by computing principal minors of $M$ is time consuming due to the exponential growth of the number of subsets that need to be analysed, and cannot be done within a reasonable time for $d>7$ using current computational facilities. For example, for $d = 10$ one would need to compute $2^{\binom{10+2}{2}} - 1 \approx 7.4 \cdot 10^{19}$ minors.

Matrices with positive principal minors are called \emph{P-matrices}. A lot of their properties are known, see \cite{P-mtr,Coxson}, e.g. Unfortunately, this theory
could not be applied for the study of the problem at hand, so a different approach, based on positive definiteness, is used.

%In this paper, the conjecture on positivity of determinant of the bivariate B\'ezier collocation matrix $M$ is confirmed. Furthermore, the result is proven for arbitrary $\Gamma\subset \mathcal I_d$ for $d\leq 17$. 
In this paper, the conjecture on positivity of principal minors of the bivariate B\'ezier collocation matrix $M$ is confirmed for arbitrary $\Gamma\subset \mathcal I_d$ for $d\leq 17$.
Thus the corresponding constrained Lagrange interpolation problem has a unique solution.
This covers all the cases useful in practice, since it is well known that polynomials of high degrees have undesired properties.
Some particular configurations of domain points are analysed. A conjecture for exact lower bound of $\det M_\Gamma$ is stated. The paper is concluded by some remarks and comments on future work.

\section{Main results}

Let $\bfm i$ be a weak 3--composition of an integer $d$, i.e., $\bfm i= (i,j,k)$, such that $|\bfm i|:=i+j+k=d$ and $i, j, k\in\NN \cup \{0\}$. Let $\mathcal I_d:=\{\bfm i\}_{|\bfm i|=d}$ be a set of all weak 3--compositions of the integer $d$. The set $\mathcal I_d$ consists of $\binom{d+2}{2}$ compositions.

Let $T$ be a triangle in the plane $P$. Every point $\bfm{v}\in P$ can be written in barycentric coordinates $\bfm{v}=(u,v,w)$, $u+v+w=1$, with respect to $T$. The \emph{Bernstein basis polynomials} of total degree $d$ in barycentric coordinates are defined as
\begin{equation*}%\label{eqn:bernPoli}
B_{\bfm i}^d(\bfm v):=B_{ijk}^d(u,v,w):=\binom{d}{\bfm i}\, \bfm v ^{\bfm i}:=\frac{d!}{i!j!k!}u^iv^jw^k,\qquad |\bfm i|=d.
\end{equation*}
%where $\bfm v^{\bfm i}:=u^iv^jw^k$.
Here the standard multi-index notation and a convention $0^0=1$ are used.

Let us denote the subset of all compositions with $z$ zeros by $\mathcal I_d^{(z)}\subset\mathcal I_d$, $z=0,1,2$. Let $\xi_{\bfm i}:=\xi_{ijk}:=\bfm i/d$ be a \emph{domain point} of the triangle $T$, represented in barycentric coordinates. A domain point $\xi_{\bfm i}$ is boundary if at least one of its barycentric coordinates is zero, i.e., $\bfm i\in\mathcal I_d^{(1)}\cup\mathcal I_d^{(2)}$. An example of domain points is shown in Figure~\ref{fig:interpLines}.\\

\begin{figure}[!htb]
\centering
\begin{overpic}[width=6cm]{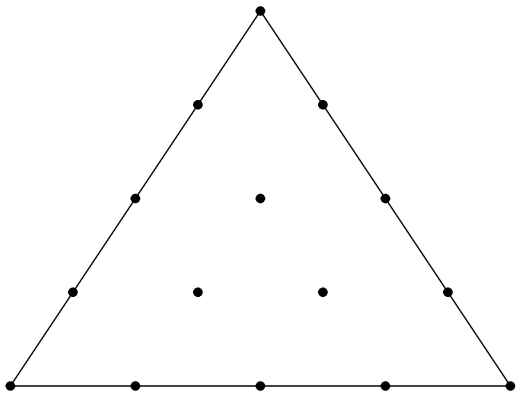}
\put(0.5,6){\small $\xi_{400}$}
\put(24.5,6){\small $\xi_{310}$}
\put(48.5,6){\small $\xi_{220}$}
\put(72.5,6){\small $\xi_{130}$}
\put(96.5,6){\small $\xi_{040}$}

\put(12.5,24){\small $\xi_{301}$}
\put(36.5,24){\small $\xi_{211}$}
\put(60.5,24){\small $\xi_{121}$}
\put(84.5,24){\small $\xi_{031}$}

\put(24.5,42){\small $\xi_{202}$}
\put(48.5,42){\small $\xi_{112}$}
\put(72.5,42){\small $\xi_{022}$}

\put(36.5,60){\small $\xi_{103}$}
\put(60.5,60){\small $\xi_{013}$}

\put(48.5,78){\small $\xi_{004}$}
\end{overpic}
\caption{Uniformly distributed domain points on triangle for $d=4$.}
\label{fig:interpLines}
\end{figure}

We are now ready to present the conjecture, stated in \cite{Sfot}, that will be tackled in this paper.

\begin{conj}[\cite{Sfot}, Conjecture 2.22]\label{conj:main}
For a given triangle $T$ and every nonempty set $\Gamma=\{\bfm i_1,\bfm i_2,\dots,\bfm i_n\}$ $\subset \mathcal I_{d}$, the matrix
\begin{align*}
M_\Gamma := [B^d_{\bfm j}(\xi_{\bfm i})]_{\bfm{i},\bfm{j}\in\Gamma}=
\left[
\begin{array}{cccc}
B_{\bfm i_1}^d(\xi_{\bfm i_1}) & B_{\bfm i_2}^d(\xi_{\bfm i_1}) & \dots & B_{\bfm i_n}^d(\xi_{\bfm i_1})\\
B_{\bfm i_1}^d(\xi_{\bfm i_2}) & B_{\bfm i_2}^d(\xi_{\bfm i_2}) & \dots & B_{\bfm i_n}^d(\xi_{\bfm i_2})\\
\vdots&\vdots&\ddots&\vdots\\
B_{\bfm i_1}^d(\xi_{\bfm i_n}) & B_{\bfm i_2}^d(\xi_{\bfm i_n}) & \dots & B_{\bfm i_n}^d(\xi_{\bfm i_n})
\end{array}
\right]
\end{align*}
is nonsingular. Furthermore, $\det M_\Gamma>0$.
\end{conj}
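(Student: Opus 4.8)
The plan is to first strip away everything that does not affect the sign of the determinant and reduce the statement to a bare monomial matrix. Because the points $\xi_{\bfm i}$ and the polynomials $B_{\bfm j}^d$ are both expressed in barycentric coordinates, the matrix $M_\Gamma$ does not depend on the triangle $T$ at all, so we may ignore $T$. Writing $\bfm i=(i_1,i_2,i_3)$, $\bfm j=(j_1,j_2,j_3)$ and using $|\bfm j|=d$, each entry is
\[
B_{\bfm j}^d(\xi_{\bfm i})=\binom{d}{\bfm j}\Big(\tfrac{i_1}{d}\Big)^{j_1}\Big(\tfrac{i_2}{d}\Big)^{j_2}\Big(\tfrac{i_3}{d}\Big)^{j_3}=\frac{1}{d^{\,d}}\binom{d}{\bfm j}\,i_1^{j_1}i_2^{j_2}i_3^{j_3}.
\]
The factor $d^{-d}$ is common to all entries and the factor $\binom{d}{\bfm j}$ depends only on the column, so pulling both out of the determinant leaves a positive constant times $\det N_\Gamma$, where $N_\Gamma:=[\,i_1^{j_1}i_2^{j_2}i_3^{j_3}\,]_{\bfm i,\bfm j\in\Gamma}$. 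Hence Conjecture~\ref{conj:main} is equivalent to $\det N_\Gamma>0$ for every nonempty $\Gamma$.

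Next I would record the exponential form that drives everything: for $i_\ell>0$ one has $i_1^{j_1}i_2^{j_2}i_3^{j_3}=\exp\langle\bfm j,\log\bfm i\rangle$ with $\log\bfm i:=(\log i_1,\log i_2,\log i_3)$, so $N_\Gamma$ is built from the bilinear exponential kernel, paired along the diagonal so that the $a$-th point carries the $a$-th exponent. This immediately settles the $1\times1$ minors ($\prod_\ell i_\ell^{i_\ell}>0$) and the $2\times2$ minors: for $\Gamma=\{\bfm a,\bfm b\}$ positivity of $\bfm a^{\bfm a}\bfm b^{\bfm b}-\bfm a^{\bfm b}\bfm b^{\bfm a}$ is, after taking logarithms, exactly the inequality $\sum_\ell(a_\ell-b_\ell)\log(a_\ell/b_\ell)>0$, each summand being nonnegative by monotonicity of $\log$ and at least one strict since $\bfm a\neq\bfm b$ and $|\bfm a|=|\bfm b|=d$. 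The boundary cases $i_\ell=0$ are handled separately with the convention $0^0=1$.

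For the full determinant (the case $\Gamma=\mathcal I_d$) I would not guess a closed form directly but factor $M$ through the monomial basis. Expanding $B_{\bfm j}^d$ in the power basis $\{x^{p}y^{q}\}_{p+q\le d}$ (after setting $w=1-u-v$) gives $M=V\,C$, where $V$ is the bivariate Vandermonde of the principal lattice $\{(i_1/d,i_2/d)\}$ and $C$ is the change-of-basis matrix. In a graded ordering $C$ is triangular with an explicitly positive diagonal, so $\det C>0$, and $\det V$ factors into a product of positive terms by exploiting that the lattice points lie on a pencil of parallel lines (the de Boor/Chung--Yao factorization). This yields $\det M>0$, i.e.\ the basic conjecture.

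The remaining, and genuinely hard, step is positivity of $\det N_\Gamma$ for an arbitrary proper subset $\Gamma$. Here I would first use the symmetry group $S_3$ acting on barycentric coordinates to collapse the admissible $\Gamma$ into orbits, and then attempt an induction on $|\Gamma|$ by peeling off an extreme composition of $\Gamma$ and controlling the resulting Schur complement. The obstacle is precisely what the introduction flags: in one variable the analogous matrix is \emph{totally positive}, because the nodes can be linearly ordered, which forces all principal minors to be positive; in the bivariate (triangular) setting there is no order making $N$ totally positive, so the monotone mechanism that closes the $2\times2$ case does not propagate to larger minors, and no general inductive certificate is available. Consequently I expect the clean argument to terminate only in structured families (boundary layers, corner configurations, the full lattice), and for unrestricted $\Gamma$ I would fall back on verifying $\det N_\Gamma>0$ degree by degree: after the $S_3$-reduction and the removal of configurations already covered by the structural lemmas, the number of essentially different subsets becomes finite and small enough to check exactly for every $d\le 17$.
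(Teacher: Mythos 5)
The reduction to $N_\Gamma$, the $1\times1$ and $2\times2$ cases, and the treatment of the full set $\Gamma=\mathcal I_d$ all run parallel to the paper (which handles the $2\times2$ case by noting that the diagonal entry is the strict column maximum --- equivalent to your logarithmic inequality --- and cites Brunat--Montes for the closed-form determinant of $M_{\mathcal I_d}$). The genuine gap is in your final step. You correctly diagnose that no total-positivity or inductive mechanism propagates beyond the $2\times2$ minors, but your fallback --- enumerate all admissible $\Gamma$ up to the $S_3$ action and check each determinant exactly for $d\le 17$ --- is precisely the brute-force route the paper rules out. The number of nonempty subsets of $\mathcal I_d$ is $2^{\binom{d+2}{2}}-1$, which for $d=17$ is on the order of $2^{171}$; quotienting by a group of order $6$ and discarding the structured families you list does not change the order of magnitude, and the paper states explicitly that this enumeration is already infeasible for $d>7$.

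The missing idea is a certificate that validates \emph{all} principal minors simultaneously rather than one at a time: if the symmetric part $M_{\mathcal I_d}+M_{\mathcal I_d}^T$ is positive definite (verifiable by a single exact Cholesky factorization), then $\bfm x^T M_{\mathcal I_d}\bfm x>0$ for all $\bfm x\neq\bfm 0$, hence every principal submatrix $M_\Gamma$ inherits this property and has positive determinant. One factorization per degree replaces the exponential enumeration, and it succeeds for $d\le 16$; for $d=17$ the symmetric part acquires three negative eigenvalues, but the block lower triangular structure of $M_{\mathcal I_d}$ under the ordering $\succ_b$ reduces the question to the interior block $M_{\mathcal I_d^{(0)}}$, whose symmetric part is again positive definite. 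Without this device your plan cannot reach any $d$ beyond $7$. Note also that even with it the conjecture remains open for $d\ge 18$, where the symmetric part fails to be positive definite, so no complete proof of the statement as posed is currently available.
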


A confirmation of Conjecture~\ref{conj:main} would imply the following. Let $\Gamma\subset\mathcal I_d$ and let $\mathcal L(\{B_{\bfm i}^d\}_{\bfm i\in\Gamma})$ be the given interpolation space. Then the interpolation problem for the points $\{\xi_{\bfm i}\}_{\bfm i \in \Gamma}$ in the domain  
would be correct, i.e., a unique interpolant would exist. Since some control coefficients of the sought polynomial are predetermined, the problem is called \emph{constrained interpolation problem}. The conjecture is important for interpolation with spline functions, since some degrees of freedom are determined by the smoothness conditions and the rest by the interpolation conditions (see \cite{Sfot, JaklicModic}, e.g.). 

Note that the matrix $M_\Gamma$ is not symmetric. Its entries are non-negative rational numbers and the largest element of every row and column of $M_\Gamma$ are on the main diagonal of the matrix. 
Eigenvalues of $M_{\mathcal I_d}$ are derived in a closed form in \cite{Cooper_Waldron-multiBernstein},
\begin{align}\label{eqn:eigs}
\lambda_{\ell}:=\frac{d!}{(d-\ell)!\,d^\ell},\qquad \ell=1,2,\dots,d,
\end{align}
with multiplicities $3,3,4,5,\dots,d+1$.

The determinant of $M_\Gamma$ is independent of the ordering of elements of $\Gamma$ since the same ordering for rows and columns of $M_\Gamma$ is used. 
%So from now on we will assume that the elements of $\Gamma$ are ordered in the counter-lexicographical order $\succ_{\textrm{c-lex}}$,unless stated otherwise.
It is common to use the counter-lexicographical ordering $\succ_{\textrm{c-lex}}$,
\begin{align*}
(d,0,0),\;(d-1,1,0),\;(d-1,0,1),\;(d-2,2,0),\dots,\;(0,0,d),
\end{align*}
but a particular ordering of elements in $\mathcal I_d$, which yields a block lower triangular matrix $M_{\mathcal I_d}$,
will be more convenient \cite{CalcDet}. The linear ordering $\succ_{\rm b}$ on $\mathcal I_d$ is defined as: $\bfm i\succ_{\rm b}\bfm j$ if one of the following holds true:
\begin{enumerate}
\item $\bfm i \in \mathcal I_d^{(z_1)}\textrm{ and } \bfm j \in \mathcal I_d^{(z_2)} \textrm{ for }z_1>z_2,\;z_i\in\{0,1,2\}$,
\item $\bfm i, \bfm j \in \mathcal I_d^{(z)} \textrm{ for } z\in\{0,1,2\} \;\;\; \textrm{ and }\;\;\; \mbox{sgn}(\bfm i) \succ_{\textrm{c-lex}} \mbox{sgn}(\bfm j)$,
\item $\bfm i, \bfm j \in \mathcal I_d^{(z)} \textrm{ for } z\in\{0,1,2\} \;\;\; \textrm{ and }\;\;\; \mbox{sgn}(\bfm i) = \mbox{sgn}(\bfm j)  \;\;\; \textrm{ and } \;\;\; \bfm i \succ_{\textrm{c-lex}}\bfm j$.
\end{enumerate}
Here $\mbox{sgn}(\bfm i)=\mbox{sgn}(i,j,k):=(\mbox{sgn}(i),\mbox{sgn}(j),\mbox{sgn}(k))$ and
\begin{align*}
\mbox{sgn}(\ell):=
\left\{\begin{array}{rl}
1, & \ell>0\\
0, & \ell=0\\
-1, & \ell<0\\
\end{array}\right.
\end{align*}
is the sign function.

%To understand the definition of the ordering $\succ_{\rm b}$ better,
Let us explain the ordering $\succ_{\rm b}$ in more detail.
Firstly, the elements of $\mathcal I_d$ are ordered with respect to the number of zeros in their 3--decompositions. Then, the elements with the same number of zeros are sorted with respect to the position of the zeros. Finally, the elements that we cannot distinguish by the first two conditions are ordered counter-lexicographically.

The ordering $\succ_{\rm b}$ implies that the matrix $M_{\mathcal I_d}$ has a block lower triangular structure
\begin{align*}
M_{\mathcal I_d}=
\left[
\begin{array}{ccc}
M_{\mathcal I_d^{(2)}}&&\\
* & M_{\mathcal I_d^{(1)}}&\\
* & * & M_{\mathcal I_d^{(0)}}
\end{array}
\right],
\end{align*}
where 
\begin{align*}
&M_{\mathcal I_d^{(2)}}=
\left[
\begin{array}{ccc}
M_{\{(d,0,0)\}} & &\\
& M_{\{(d,0,0)\}} &\\
& & M_{\{(d,0,0)\}}
\end{array}
\right],%\\
&M_{\mathcal I_d^{(1)}}=
\left[
\begin{array}{ccc}
M_{\epsilon} & &\\
& M_{\epsilon} &\\
& & M_{\epsilon}
\end{array}
\right],
\end{align*}
and
$\epsilon:=\{\xi_{ij0}\in\mathcal I_d:\; i,j\geq 1\}$.

From the structure of the matrix  $M_{\mathcal I_d}$ it follows that the problem of verifying the positivity of principal minors of the matrix is reduced to each diagonal block matrix separately. The first three $1\times 1$ blocks correspond to domain points at the vertices of the triangle $T$. The next three blocks are $(d-1) \times (d-1)$ matrices $M_\epsilon$ that correspond to domain points lying in the interior of the triangle edges. The matrix $M_\epsilon$ is a univariate B\'{e}zier collocation matrix and by \cite{deBoor_deVore} it is totally non-negative with positive principal minors. Therefore, the interpolation problem reduces to the study of the remaining $\binom{d-1}{2}\times\binom{d-1}{2}$ matrix $M_{\mathcal I_d^{(0)}}$ that corresponds to interior domain points. Unfortunately, this block represents a very large part of the matrix $M_{\mathcal I_d}$ for a large $d$.
As an example, the matrix $M_{\mathcal I_4}$, is shown in Figure~\ref{fig:mtrM4}.

\begin{figure}[!hbt]
%\begin{align*}
%M_{\mathcal I_4} = \frac{1}{256}\left[
%\begin{array}{c|c|c|ccc|ccc|ccc|ccc}
%256 & 0 & 0 & 0 & 0 & 0 & 0 & 0 & 0 & 0 & 0 & 0 & 0 & 0 & 0\\
%0& 256& 0& 0& 0& 0& 0& 0& 0& 0& 0& 0& 0& 0& 0\\
%0& 0& 256& 0& 0& 0& 0& 0& 0& 0& 0& 0&  0& 0& 0\\
%81& 1& 0& 108& 54& 12& 0& 0& 0& 0& 0& 0& 0& 0& 0\\
%16& 16& 0& 64& 96& 64& 0& 0& 0& 0& 0& 0& 0& 0& 0\\
%1& 81& 0& 12& 54& 108& 0& 0& 0& 0& 0& 0& 0& 0& 0\\
%81& 0& 1& 0& 0& 0& 108& 54& 12& 0& 0& 0& 0& 0& 0\\
%16& 0& 16& 0& 0& 0& 64& 96& 64& 0& 0& 0& 0& 0& 0\\
%1& 0& 81& 0& 0& 0& 12& 54& 108& 0& 0& 0& 0& 0& 0\\
%0& 81& 1& 0& 0& 0& 0& 0& 0& 108& 54& 12& 0& 0& 0\\
%0& 16& 16& 0& 0& 0& 0& 0& 0& 64& 96& 64& 0& 0& 0\\
%0& 1& 81& 0& 0& 0& 0& 0& 0& 12& 54& 108& 0& 0& 0\\
%16& 1& 1& 32& 24& 8& 32& 24& 8& 4& 6& 4& 48& 24& 24\\
%1& 16& 1& 8& 24& 32& 4& 6& 4& 32& 24& 8& 24& 48& 24\\
%1& 1& 16& 4& 6& 4& 8& 24& 32& 8& 24& 32& 24& 24& 48
%\end{array}
%\right]
%\end{align*}
\begin{align*}
M_{\mathcal I_4} = \frac{1}{256}
\begin{pmat}[{|||..|..|..|..}]
256 & 0 & 0 & 0 & 0 & 0 & 0 & 0 & 0 & 0 & 0 & 0 & 0 & 0 & 0 \cr\-
0& 256& 0& 0& 0& 0& 0& 0& 0& 0& 0& 0& 0& 0& 0 \cr\-
0& 0& 256& 0& 0& 0& 0& 0& 0& 0& 0& 0&  0& 0& 0 \cr\-
81& 1& 0& 108& 54& 12& 0& 0& 0& 0& 0& 0& 0& 0& 0 \cr
16& 16& 0& 64& 96& 64& 0& 0& 0& 0& 0& 0& 0& 0& 0 \cr
1& 81& 0& 12& 54& 108& 0& 0& 0& 0& 0& 0& 0& 0& 0 \cr\-
81& 0& 1& 0& 0& 0& 108& 54& 12& 0& 0& 0& 0& 0& 0 \cr
16& 0& 16& 0& 0& 0& 64& 96& 64& 0& 0& 0& 0& 0& 0 \cr
1& 0& 81& 0& 0& 0& 12& 54& 108& 0& 0& 0& 0& 0& 0 \cr\-
0& 81& 1& 0& 0& 0& 0& 0& 0& 108& 54& 12& 0& 0& 0 \cr
0& 16& 16& 0& 0& 0& 0& 0& 0& 64& 96& 64& 0& 0& 0 \cr
0& 1& 81& 0& 0& 0& 0& 0& 0& 12& 54& 108& 0& 0& 0 \cr\-
16& 1& 1& 32& 24& 8& 32& 24& 8& 4& 6& 4& 48& 24& 24 \cr
1& 16& 1& 8& 24& 32& 4& 6& 4& 32& 24& 8& 24& 48& 24 \cr
1& 1& 16& 4& 6& 4& 8& 24& 32& 8& 24& 32& 24& 24& 48 \cr
\end{pmat}
\end{align*}
\caption{Matrix $M_{\mathcal I_4}$ with the linear ordering $\succ_{\rm b}$.}
\label{fig:mtrM4}
\end{figure}

Now we are ready to present one of the main results of the paper.

\begin{thm}\label{thm:conj17}
Let $d\leq 17$. Then Conjecture \ref{conj:main} holds true, i.e., $\det M_\Gamma>0$ for every nonempty subset $\Gamma\subset \mathcal I_d$.
\end{thm}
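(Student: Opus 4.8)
The plan is to exploit the block-lower-triangular form of $M_{\mathcal I_d}$ induced by the ordering $\succ_b$ and reduce the statement to a single diagonal block. First I would note that an entry $B^d_{\bfm j}(\xi_{\bfm i})$ is a positive multiple of $\bfm i^{\bfm j}$, so it is nonzero exactly when $\mathrm{supp}(\bfm j)\subseteq\mathrm{supp}(\bfm i)$; in particular it vanishes whenever $\bfm i$ has strictly more zero coordinates than $\bfm j$, which is precisely the upper-right part of the displayed block decomposition. Hence, for an arbitrary $\Gamma$, the principal submatrix $M_\Gamma$ reordered by $\succ_b$ is again block lower triangular, with diagonal blocks the principal submatrices of $M_{\mathcal I_d^{(2)}}$, $M_{\mathcal I_d^{(1)}}$ and $M_{\mathcal I_d^{(0)}}$ indexed by $\Gamma\cap\mathcal I_d^{(z)}$, and therefore
\begin{equation*}
\det M_\Gamma=\prod_{z=0}^{2}\det M_{\Gamma\cap\mathcal I_d^{(z)}}.
\end{equation*}
It thus suffices to prove that each diagonal block is a P-matrix, i.e.\ that all its principal minors are positive.

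The two boundary blocks are settled for every $d$: $M_{\mathcal I_d^{(2)}}$ is diagonal with positive entries, while $M_{\mathcal I_d^{(1)}}$ is block diagonal with three copies of the univariate B\'ezier collocation matrix $M_\epsilon$, which is totally non-negative with positive principal minors by \cite{deBoor_deVore}. The whole difficulty therefore lies in the interior block $M_{\mathcal I_d^{(0)}}$ of order $\binom{d-1}{2}$. Using (\ref{eqn:NM}) I would pass to the integer matrix $N_{\mathcal I_d^{(0)}}$ with entries $\bfm i^{\bfm j}$, for which all computations are exact; since $\mathrm{sgn}\det M_\Gamma=\mathrm{sgn}\det N_\Gamma$, the block $M_{\mathcal I_d^{(0)}}$ is a P-matrix iff $N_{\mathcal I_d^{(0)}}$ is.

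The main obstacle is that testing the P-property by enumerating principal minors is hopeless here: even the reduced interior block has order $120$ for $d=17$, so its $2^{120}$ principal minors cannot be listed, and the $S_3$-symmetry permuting the barycentric coordinates removes only a bounded factor. To bypass this I would replace the exponential test by a single, polynomially checkable sufficient condition: if a real matrix $A$ has positive definite symmetric part $A+A^{\top}$, then every principal submatrix $A[S]$ satisfies $A[S]+A[S]^{\top}=(A+A^{\top})[S]\succ 0$ and hence has positive determinant, so $A$ is a P-matrix. Concretely, for each fixed $d\le 17$ I would seek positive diagonal matrices $D_1,D_2$ (which do not affect the signs of principal minors) such that the symmetric part of $D_1 N_{\mathcal I_d^{(0)}} D_2$ is positive definite, and certify this exactly through the signs of its leading principal minors. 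In the smallest case the condition is transparent: for $d=4$ the matrix $N_{\mathcal I_4^{(0)}}$ is already symmetric with eigenvalues $8,2,2$, hence positive definite.

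The crux, and the reason for the bound $d\le 17$, is that this sufficient condition is not guaranteed a priori. I expect the scaled symmetric part to lose positive definiteness as $d$ increases, so the verification must be performed degree by degree and the argument stops at the largest $d$ for which a certifying pair $D_1,D_2$ exists. Exhibiting such a scaling for every $d\le 17$ and checking the associated $O\!\left(\binom{d-1}{2}^{3}\right)$ leading minors exactly is thus the computational heart of the proof, whereas the block reduction and the disposal of the boundary blocks are routine.
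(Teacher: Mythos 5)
Your proposal is correct and follows essentially the same route as the paper: the key sufficient condition (a matrix whose symmetric part is positive definite is a P-matrix) certified by an exact, degree-by-degree Cholesky/leading-minor computation, combined with the block-triangular reduction that disposes of the boundary blocks via \cite{deBoor_deVore}. The only cosmetic differences are that the paper checks positive definiteness of $M_{\mathcal I_d}+M_{\mathcal I_d}^T$ directly for $d\le 16$ and invokes the interior-block reduction only at $d=17$ (where that matrix has three negative eigenvalues), and that it works with $M$ itself rather than a diagonally rescaled $N$ --- the column scaling relating $N$ to $M$ in (\ref{eqn:NM}) being precisely a certifying pair $D_1,D_2$ of the kind you propose to seek.
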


\begin{proof}
%Let $n:=\binom{d+2}{2}$. 
Every non-symmetric matrix $M$ is positive definite, i.e., $\bfm{x}^T M\bfm{x}>0$ for all 
$\bfm{x} \in \RR^{n}\backslash \{\bfm 0\}$, $n:=\binom{d+2}{2}$, iff the symmetric matrix $M + M^T$ is positive definite \cite{Johnson_PositDefMtr}. Hence the positive definiteness of $M$ can be verified by analysing the later symmetric matrix.

Fix $d$, $1\leq d\leq 16$. By using a symbolic computational toolbox we can compute Cholesky decomposition of $M_{\mathcal I_d}+M_{\mathcal I_d}^T$ in exact arithmetics. It can be verified that the decomposition exists for $1\leq d\leq 16$. Therefore the matrix $M_{\mathcal I_d}+M_{\mathcal I_d}^T$ is positive definite and so is $M_{\mathcal I_d}$. 
Thus all principal submatrices $M_\Gamma$ of $M_{\mathcal I_d}$ are positive definite too. Hence, all real eigenvalues of every $M_\Gamma$ are positive. Since the determinant of a matrix is the product of its eigenvalues, it follows that all principal minors of $M_{\mathcal I_d}$ are positive.

For $d=17$, the matrix $M_{\mathcal I_d}+M_{\mathcal I_d}^T$ has three negative eigenvalues. 
%By using the ordering $\succ_b$, the problem reduces to the study of the matrix $M_{\mathcal I_d^{(0)}}$. 
The matrices $M_{\mathcal I_d^{(2)}}$ and $M_{\mathcal I_d^{(1)}}$ are P-matrices, thus the problem reduces to the study of the matrix $M_{\mathcal I_d^{(0)}}$. 
It can be verified that the Cholesky decomposition of the matrix $M_{\mathcal I_d^{(0)}}+M_{\mathcal I_d^{(0)}}^T$ exists and the rest of the proof is similar to the first part.
\end{proof}

Note that instead of computing the Cholesky decomposition of $M_{\mathcal I_d}+M_{\mathcal I_d}^T$ it is equivalent to verify that all of $\binom{d+2}{2}$ leading principal minors or all of $\binom{d+2}{2}$ eigenvalues are positive. The exact expressions are too large to be written in the paper but the reader can easily verify the results.

For $d=18$, the smallest eigenvalue of $M_{\mathcal I_d}+M_{\mathcal I_d}^T$ is approximately $-1.1 \cdot 10^{-7}$.
% and the matrix $M_{\mathcal I_d^{(0)}}+M_{\mathcal I_d^{(0)}}^T$ has negative eigenvalues also. 
%Numerical tests suggest that for $d\geq 18$ the matrix $M_{\mathcal I_d}+M_{\mathcal I_d}^T$ has at least one negative eigenvalue. Furthermore, the number of negative eigenvalues increases with $d$.
For $d \ge 18$, the number of negative eigenvalues of the matrix $M_{\mathcal I_d}+M_{\mathcal I_d}^T$ increases with $d$.
Therefore, this approach cannot be used to prove the conjecture in general, since the positive definiteness is only a sufficient condition for positivity of principal minors.
However, our result covers all the cases important in practice, since only Lagrange polynomial interpolants of low degrees are useful. 

%\begin{dsc}
The reason why the presented approach only works for $d\leq 16$ for $M_{\mathcal I_d}$ can be explained by the following observation. Since the matrix is not symmetric, its numerical range $\{(\bfm x^T M_{\mathcal I_d} \bfm x)/(\bfm x^T \bfm x):\; \bfm x \in \RR^n\backslash \{\bfm 0\} \}$, $n:=\binom{d+2}{2}$, is not bounded by the largest and the smallest eigenvalue of $M_{\mathcal I_d}$. Eigenvalues of $M_{\mathcal I_d}$ are positive (see \eqref{eqn:eigs}) but the majority of them are closer and closer to zero as we increase $d$. At $d=17$, the eigenvalues are so dense around zero that the numerical range passes the zero border. At that point the matrix is no longer positive definite.
%\end{dsc}

Theorem~\ref{cor:constInterpol} confirms the second part of the Conjecture 2.22 in \cite{Sfot} for $d\leq 17$. The result follows straightforwardly from Theorem~\ref{thm:conj17}.

\begin{thm}\label{cor:constInterpol}
Let $\Gamma\subset \mathcal I_d$ and let $d\leq 17$. Then for any $\{z_{\bfm i} \in \RR \}_{\bfm i\in\Gamma}$, there is a unique polynomial of the form
\begin{align*}
p:=\sum_{\bfm i \in \Gamma}c_{\bfm i} B^d_{\bfm i}
\end{align*}
that solves the constrained interpolation problem
\begin{align*}
p(\xi_{\bfm i})=z_{\bfm i},\qquad \bfm i\in\Gamma.
\end{align*}
\end{thm}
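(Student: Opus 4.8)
The plan is to recognize the interpolation conditions as a square system of linear equations in the unknown coefficients $\{c_{\bfm i}\}_{\bfm i\in\Gamma}$ and to read off its coefficient matrix as $M_\Gamma$, so that Theorem~\ref{thm:conj17} applies directly. First I would substitute the ansatz $p=\sum_{\bfm j\in\Gamma}c_{\bfm j}B^d_{\bfm j}$ into the conditions $p(\xi_{\bfm i})=z_{\bfm i}$, $\bfm i\in\Gamma$, obtaining for each $\bfm i\in\Gamma$ the equation
$$
\sum_{\bfm j\in\Gamma}c_{\bfm j}\,B^d_{\bfm j}(\xi_{\bfm i})=z_{\bfm i}.
$$
Collecting the $|\Gamma|$ equations (one per $\bfm i\in\Gamma$) together with the $|\Gamma|$ unknowns $c_{\bfm j}$, this is a square linear system whose coefficient matrix has $(\bfm i,\bfm j)$ entry $B^d_{\bfm j}(\xi_{\bfm i})$ — that is, precisely the matrix $M_\Gamma=[B^d_{\bfm j}(\xi_{\bfm i})]_{\bfm i,\bfm j\in\Gamma}$ of Conjecture~\ref{conj:main}.

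Next I would invoke Theorem~\ref{thm:conj17}: for $d\leq 17$ we have $\det M_\Gamma>0$ for every nonempty $\Gamma\subset\mathcal I_d$, so $M_\Gamma$ is nonsingular. A square linear system with a nonsingular coefficient matrix possesses a unique solution $\{c_{\bfm i}\}_{\bfm i\in\Gamma}$ for every right-hand side $\{z_{\bfm i}\}_{\bfm i\in\Gamma}$; this solution yields exactly one polynomial $p$ of the prescribed form satisfying the interpolation conditions, which establishes both existence and uniqueness.

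The only care needed is bookkeeping: one must match the row index (the evaluation point $\xi_{\bfm i}$) with the column index (the basis polynomial $B^d_{\bfm j}$) so that the system matrix is identified with $M_\Gamma$ rather than its transpose; since $\det M_\Gamma=\det M_\Gamma^T$, even this distinction is harmless for the conclusion. There is no genuine obstacle at this stage — the entire mathematical content resides in Theorem~\ref{thm:conj17}, and the present statement is a direct linear-algebraic reformulation of the nonsingularity of $M_\Gamma$.
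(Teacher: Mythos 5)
Your proposal is correct and coincides with the paper's own (implicit) argument: the paper simply notes that Theorem~\ref{thm:conj17} implies this result, and the content of that implication is exactly your observation that the interpolation conditions form a square linear system with coefficient matrix $M_\Gamma$, which is nonsingular for $d\leq 17$.
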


\begin{rmk}
Theorem~\ref{cor:constInterpol} generalizes \cite[Theorem~3]{conj}. Its proof avoids computation of all subdeterminants, as was the case in \cite{conj}.
\end{rmk}

\begin{rmk}\label{rmk:trivar}
The approach for proving Theorem~\ref{thm:conj17} can also be used to prove a similar result for the trivariate case: the trivariate B\'ezier collocation
matrix for $d \le 15$ is positive definite, and thus the corresponding constrained polynomial Lagrange interpolation problem is correct.
\end{rmk}

Let us simplify the considered matrix $M_{\Gamma}$. Let us construct a matrix $N_\Gamma$ from $M_\Gamma$ in the following way:
\begin{itemize}
%\item for every column, divide each element of a column of the matrix $M_\Gamma$, that corresponds to a polynomial $B_{\bfm i}^d$, by $\binom{d}{\bfm i}$,
\item for every column, divide each element, that corresponds to a polynomial $B_{\bfm i}^d$, by $\binom{d}{\bfm i}$,
\item multiply the obtained matrix by $d^d$.
\end{itemize}
Thus an element $B_{\bfm{j}}^d(\xi_{\bfm{i}})$ is transformed into $\bfm{i}^{\bfm{j}}$.
%Thus the transformation can be also written with a following function
%$$
%\left[\binom{d}{\bfm i}(\bfm j/d)^{\bfm i}\right]_{\bfm i,\, \bfm j\in\mathcal I_d}\to \left[\bfm j^{\bfm i}\right]_{\bfm i,\, \bfm j\in\mathcal I_d}
%$$
Therefore
\begin{equation} \label{eqn:NM}
\det N_\Gamma = \frac{d^{d\cdot |\Gamma|}}{\displaystyle{\prod_{\bfm i\in\Gamma} \binom{d}{\bfm i}}} \det M_\Gamma.
\end{equation}

Clearly, the matrix $N_\Gamma$ is a principal submatrix of $N_{\mathcal I_d}$. Since $\mbox{sgn} (\det M_\Gamma) = \mbox{sgn} (\det N_\Gamma$), Conjecture \ref{conj:main} holds true for $M_\Gamma$ iff it holds true for $N_\Gamma$.

The matrix $N_\Gamma$ has some nice properties. It consists only of non-negative integers, thus determinant computations are exact. The matrix $N_{\mathcal I_d}$ has a simpler structure than $M_{\mathcal I_d}$ and is closely related to combinatorial objects. Therefore some properties of the matrix 
$M_{\mathcal I_d}$ will be proven via $N_{\mathcal I_d}$.
%It is also trivial to see that $\widetilde{M_\mathcal D(\Gamma)}=\widetilde M_\mathcal D(\Gamma)$ (it doesn't matter if we take the submatrix first and then use the operator $\widetilde{\phantom{a}}$ or vice versa). 
Note that some of the properties are not preserved by the transformation $M_\Gamma\to N_\Gamma$ (for example, see Remark~\ref{rmk:N4}).

\begin{rmk}\label{rmk:N4}
The matrix $N_{\mathcal I_d}$ is positive definite only for $d\leq 4$.
\end{rmk}

Now let us consider some particular configurations of domain points (and the corresponding choices of $\Gamma$) for $d$ arbitrary.

\begin{thm} \label{thm:Gamme}
Let $d$ be arbitrary and let $\Gamma$ satisfy one of the following assumptions:
\begin{enumerate}
\item $|\Gamma|\leq 2$,
\item let one of the components of $(i,j,k)$ be fixed for all elements in $\Gamma$,
\item $\Gamma=\mathcal I_d$,
\item $\Gamma=\{(i,j,k)\in \mathcal I_d: \; i\geq i_0,\, j \geq j_0,\, k \geq k_0\}$ for fixed non-negative integers $i_0,\, j_0,\, k_0$,
\item $\Gamma \subset \mathcal I_d^{(2)} \cup \mathcal I_d^{(1)}$,
\item $\Gamma=\Gamma_1\cup\Gamma_2$, where $\Gamma_1$ is one of the sets, defined in \emph{1.}, \emph{2.}\ or \emph{4.}, and $\Gamma_2$ is a set in \emph{5.}
\end{enumerate}
Then $\det M_{\Gamma}>0$.
\end{thm}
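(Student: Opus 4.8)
The plan is to work throughout with the transformed matrix $N_\Gamma$, whose entries are the non-negative integers $\bfm i^{\bfm j}$; since $\textrm{sgn}\det M_\Gamma=\textrm{sgn}\det N_\Gamma$, it suffices to prove $\det N_\Gamma>0$, while switching back to $M_\Gamma$ whenever its block structure is convenient. I would group the six configurations into three types: the low-dimensional cases 1.\ and 2., settled by a direct inequality; the boundary case 5.\ together with the reductions 3.\ and 6., handled by the block-triangular structure of $M_{\mathcal I_d}$ recorded above; and the interior case 4., which is the crux and to which case 3.\ ultimately reduces.

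For case 1.\ with $\Gamma=\{\bfm i_1,\bfm i_2\}$, $\bfm i_1=(i,j,k)$, $\bfm i_2=(a,b,c)$, one has $\det N_\Gamma=\bfm i_1^{\bfm i_1}\bfm i_2^{\bfm i_2}-\bfm i_1^{\bfm i_2}\bfm i_2^{\bfm i_1}$, so I must show $\bfm i_1^{\bfm i_1}\bfm i_2^{\bfm i_2}>\bfm i_1^{\bfm i_2}\bfm i_2^{\bfm i_1}$. If the two compositions have different supports, the right-hand side carries a factor $0^{\,p}$ with $p\ge 1$ and hence vanishes, while the left-hand side is a product of terms $n^n\ge 1$; otherwise I take logarithms over the positive coordinates and reduce the claim to $(i-a)(\ln i-\ln a)+(j-b)(\ln j-\ln b)+(k-c)(\ln k-\ln c)>0$, which holds because $t\mapsto\ln t$ is strictly increasing and $\bfm i_1\neq\bfm i_2$. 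For case 2., after fixing say the third coordinate $k_0$ and setting $m:=d-k_0$, every entry factors as $\bfm i^{\bfm j}=i^{p}(m-i)^{m-p}k_0^{k_0}$; pulling out the constant $(k_0^{k_0})^{|\Gamma|}$ and using $i^{p}(m-i)^{m-p}=\tfrac{m^m}{\binom{m}{p}}B_p^m(i/m)$ identifies the remaining factor, up to a positive column scaling, with a principal submatrix of the univariate Bernstein collocation matrix at the distinct points $i/m$, whose principal minors are positive by \cite{deBoor_deVore}.

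Cases 5., 3.\ and 6.\ I would deduce from the ordering $\succ_b$ and the fact that a boundary point annihilates every interior basis polynomial: if $\bfm i$ has a zero coordinate and $\bfm j\in\mathcal I_d^{(0)}$ then $B^d_{\bfm j}(\xi_{\bfm i})=0$. For case 5., the matrix $M_\Gamma$ with $\Gamma\subset\mathcal I_d^{(2)}\cup\mathcal I_d^{(1)}$ is block lower triangular with diagonal blocks that are principal submatrices of the P-matrices $M_{\mathcal I_d^{(2)}}$ and $M_{\mathcal I_d^{(1)}}$, so $\det M_\Gamma$ is a product of positive principal minors. For 3.\ and 6.\ I split $\Gamma$ into its boundary part $\Gamma_{\mathrm{bd}}=\Gamma\cap(\mathcal I_d^{(2)}\cup\mathcal I_d^{(1)})$ and its interior part $\Gamma_{\mathrm{int}}=\Gamma\cap\mathcal I_d^{(0)}$; the vanishing above makes $M_\Gamma$ block lower triangular, whence $\det M_\Gamma=\det M_{\Gamma_{\mathrm{bd}}}\,\det M_{\Gamma_{\mathrm{int}}}$. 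Here $\Gamma_{\mathrm{bd}}$ falls under case 5., while $\Gamma_{\mathrm{int}}$ is empty, has at most two elements, has a fixed coordinate, or equals $\mathcal I_d^{(0)}$ — that is, it is covered by cases 1., 2.\ or 4.\ — so both factors are positive.

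This leaves case 4., $\det M_{\mathcal I_d^{(0)}}>0$ for every $d$, which I expect to be the main obstacle and to which case 3.\ (hence the headline determinant $\det M_{\mathcal I_d}$) reduces after dividing by the positive factors $\det M_{\mathcal I_d^{(2)}}$ and $\det M_{\mathcal I_d^{(1)}}$. The symmetrization argument of Theorem~\ref{thm:conj17} is unavailable here, since $M_{\mathcal I_d^{(0)}}+M_{\mathcal I_d^{(0)}}^T$ ceases to be positive definite for large $d$, so a $d$-uniform proof is required. My plan is to produce a closed-form, manifestly positive expression for $\det N_{\mathcal I_d}$ by reading the columns of $N_{\mathcal I_d}$ as the polynomials $x^{j_1}y^{j_2}(d-x-y)^{j_3}$ evaluated on the principal lattice $\{(i,j):i+j\le d\}$: writing $N_{\mathcal I_d}=V\,C$, where $V=[\,i^{p}j^{q}\,]$ is the classical bivariate Vandermonde on that lattice and $C$ is the transition matrix from the standard monomial basis to this restricted Bernstein basis, gives $\det N_{\mathcal I_d}=\det V\cdot\det C$. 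The factor $\det V$ is positive by the known principal-lattice Vandermonde formula, so the crux is to evaluate $\det C$ and fix its sign; I expect this to be the technically hardest step, likely requiring an explicit product formula or an induction on $d$ through a Schur-complement (border) recursion, and it is exactly the step that the eigenvalue computations of Theorem~\ref{thm:conj17} cannot replace.
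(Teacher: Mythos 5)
Your handling of cases 1, 2, 5 and 6 is sound and essentially matches the paper: case 1 via the elementary inequality $\bfm i_1^{\bfm i_1}\bfm i_2^{\bfm i_2}>\bfm i_1^{\bfm i_2}\bfm i_2^{\bfm i_1}$ (the paper gets the same thing from the fact that the diagonal entry is the strict maximum of each column of $M_\Gamma$, i.e.\ from the unique maximum of $B_{\bfm i}^d$ at $\xi_{\bfm i}$), case 2 by rescaling to a univariate B\'ezier collocation matrix and citing de Boor--deVore, and cases 5 and 6 from the block lower triangular structure induced by $\succ_b$.

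The genuine gap is in cases 3 and 4, which you yourself identify as the crux and then do not close. You correctly observe that 3 and 4 are equivalent modulo the factorization $\det M_{\mathcal I_d}=\det M_{\mathcal I_d^{(2)}}\det M_{\mathcal I_d^{(1)}}\det M_{\mathcal I_d^{(0)}}$, and you propose to evaluate $\det N_{\mathcal I_d}$ by writing $N_{\mathcal I_d}=VC$ with $V$ a principal-lattice Vandermonde matrix; but the entire burden then falls on evaluating $\det C$ and fixing its sign, and your proposal explicitly leaves that step open (``I expect this to be the technically hardest step\dots''). A plan that defers the decisive computation is not a proof. The paper does not carry out any such computation itself: it closes case 3 by quoting the closed-form product formula for the power-compositions determinant $\det[\bfm i^{\bfm j}]_{\bfm i,\bfm j\in\mathcal I_d}$ due to Brunat and Montes \cite{CalcDet}, which is manifestly a product of positive factors (formula (\ref{eqn:detMId})), and then obtains case 4 by dividing out the two boundary blocks exactly as you propose. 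So either you must cite that known determinant formula, or you must actually produce the product formula for $\det C$ (together with its sign) that your factorization requires; as written, the central case of the theorem is unproven.
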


\begin{proof}
Case 1: For $|\Gamma|=1$, the matrix $M_\Gamma$ is a positive number. 

Now let $\Gamma=\{\bfm i_1, \bfm i_2\}$. Since the largest element of every column in $M_\Gamma$ is on the diagonal of $M_\Gamma$,
\begin{align*}
\det M_\Gamma =
\left|
\begin{array}{cc}
B_{\bfm i_1}^d(\xi_{\bfm i_1}) & B_{\bfm i_2}^d(\xi_{\bfm i_1}) \\
B_{\bfm i_1}^d(\xi_{\bfm i_2}) & B_{\bfm i_2}^d(\xi_{\bfm i_2})
\end{array}
\right|>0.
\end{align*}
\\

Case 2: Let one of the components of $(i,j,k)$ be fixed.
%Let $\Gamma$ be as defined in 4. 
Without loss of generality we may assume that $\bfm i_\ell=(i_\ell, j_\ell, k)$, $i_\ell+j_\ell+k=d$, $\ell \in \{1,2,\dots,|\Gamma|\}$. By dividing each element of $N_\Gamma$ by $k^k$ and multiplying each column by a proper constant, the matrix $N_\Gamma$ transforms to a univariate B\'{e}zier collocation matrix, which is a P-matrix by \cite{deBoor_deVore}.\\

Case 3: It follows from \cite{CalcDet} and (\ref{eqn:NM}) that
\begin{align}\label{eqn:detMId}
\det M_{\mathcal I_d}=
d^{-d\binom{d+2}{2}}\prod_{\bfm i\in\mathcal I_d} \binom{d}{\bfm i}
\prod_{k=1}^{\min\{d,3\}}\left( d^{\binom{d-1}{k}}\prod_{i=1}^{d-k+1} i^{(d-i+1)\binom{d-i-1}{k-2}} \right)^{\binom{3}{k}}>0.
\end{align}

Case 4: Firstly, let us revise the proof of nonsingularity of $M_\Gamma$ (see \cite{Sfot}). By appropriately multiplying rows and columns of $M_\Gamma$, we obtain a collocation matrix $\tilde M_\Gamma$ consisting of all polynomials of total degree $\leq d_0:=d-i_0-j_0-k_0$ and domain points that correspond to $\Gamma$. Note that the interpolation problem remains correct if the domain points $\{\xi_{\bfm i}:\; \bfm i \in \mathcal I_{d_0}\}$ are translated and scaled by a positive factor (see \cite{Chui_Lai-Vandermonde-Lagrange} or Theorem 1.10 in \cite{Sfot}). Let $M(\lambda)$, $\lambda\in[0,1]$, denote a homotopy that changes the domain points by such transformation and $M(0)=M_{\mathcal I_{d_0}}$, $M(1)=\tilde M_\Gamma$. Since $\det M_{\mathcal I_{d_0}}>0$ (see case 3 of this theorem) and the matrix $M(\lambda)$ is nonsingular for every $\lambda\in[0,1]$, it follows that $\det \tilde M_\Gamma>0$.\\

Case 5, case 6: For $\Gamma \subset \mathcal I_d^{(2)} \cup \mathcal I_d^{(1)}$ and $\Gamma=\Gamma_1\cup\Gamma_2$, the result follows straightforwardly from the structure of the matrix $M_\Gamma$.
\end{proof}

\begin{rmk}
The set $\Gamma$ in Theorem~\ref{thm:Gamme}, case 2 corresponds to domain points in the triangle $T$, lying on a line parallel to some edge of $T$.

Although the proof that $\det M_{\mathcal I_d}>0$ may seem easy, most of the paper \cite{CalcDet} is dedicated to the derivation of determinant formula in a closed form (\ref{eqn:detMId}).

The set $\Gamma$ in case 4 corresponds to domain points that form a scaled triangle of triangle $T$.

The subset of decompositions $\Gamma$ in case 5 corresponds to the constrained interpolation problem at boundary domain points of the triangle $T$.

\end{rmk}

\section{New conjectures and open problems}

In the rest of the paper we state some conjectures that expand the Conjecture \ref{conj:main} %, and examine some interesting properties of considered matrices.
and present a small interpolation problem with three domain points that may seem easy at first glance but the existence of the solution remains unproven so far.

The following two conjectures state exact lower bounds for $\det M_\Gamma$ and $\det N_\Gamma$, respectively.
\begin{conj}\label{conj-mindet}%Fix $d$. 
For $d$ fixed,
\begin{equation*} %\label{eqn:conj1}
\min _{\substack{\Gamma\subset{\mathcal I_d}\\ \Gamma\neq \emptyset}} \det M_\Gamma= \det M_{\mathcal I_d},
\end{equation*}
where $\det M_{\mathcal I_d}$ is given in \eqref{eqn:detMId}.
\end{conj}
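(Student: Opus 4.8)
The plan is to reduce the conjecture to a monotonicity statement under set inclusion. Every nonempty $\Gamma\subset\mathcal I_d$ can be enlarged to $\mathcal I_d$ by adjoining one composition at a time, so it suffices to prove that adjoining a composition never increases the determinant, i.e. that for every $\Gamma$ and every $\bfm i_0\in\mathcal I_d\setminus\Gamma$
$$
\det M_{\Gamma\cup\{\bfm i_0\}}\;\le\;\det M_\Gamma .
$$
Chaining this inequality along any maximal chain from $\Gamma$ up to $\mathcal I_d$ then yields $\det M_\Gamma\ge\det M_{\mathcal I_d}$, with the minimum attained at $\Gamma=\mathcal I_d$. Throughout I would invoke Theorem~\ref{thm:conj17} (for $d\le 17$), or the still-open Conjecture~\ref{conj:main} in general, to guarantee that every determinant in sight is strictly positive, so that the Schur complements below are well defined.

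To establish the single-step inequality I would border $M_\Gamma$ by the row and column of $\bfm i_0$ and use the Schur-complement identity
$$
\det M_{\Gamma\cup\{\bfm i_0\}}=\det M_\Gamma\cdot s,\qquad
s=B^d_{\bfm i_0}(\xi_{\bfm i_0})-\sum_{\bfm i\in\Gamma}B^d_{\bfm i}(\xi_{\bfm i_0})\,x_{\bfm i},
$$
where $x=M_\Gamma^{-1}\big(B^d_{\bfm i_0}(\xi_{\bfm i})\big)_{\bfm i\in\Gamma}$. The crucial point is that $x$ is exactly the coefficient vector of the interpolant $g:=\sum_{\bfm i\in\Gamma}x_{\bfm i}B^d_{\bfm i}$ to the function $B^d_{\bfm i_0}$ at the domain points $\{\xi_{\bfm i}\}_{\bfm i\in\Gamma}$, so that $s=B^d_{\bfm i_0}(\xi_{\bfm i_0})-g(\xi_{\bfm i_0})$. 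Since $\det M_\Gamma>0$, the desired inequality is equivalent to $s\le 1$, and, via the cofactor formula $\big(M_{\Gamma\cup\{\bfm i_0\}}^{-1}\big)_{\bfm i_0\bfm i_0}=1/s$, the whole conjecture becomes the clean reformulation: \emph{every principal submatrix $M_\Gamma$ has all diagonal entries of its inverse at least $1$}.

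The key analytic input I would exploit is that each Bernstein polynomial is a multinomial probability, hence $0\le B^d_{\bfm i}(\bfm v)\le 1$ on the simplex and the rows of $M_{\mathcal I_d}$ sum to $1$. In particular $B^d_{\bfm i_0}(\xi_{\bfm i_0})\le 1$, so $s\le 1$ reduces to the one-sided bound
$$
g(\xi_{\bfm i_0})\;\ge\;B^d_{\bfm i_0}(\xi_{\bfm i_0})-1
$$
on the value, at the omitted point, of the interpolant to $B^d_{\bfm i_0}$; it suffices to show that $g$ does not undershoot this (non-positive) threshold there. As consistency checks one can verify the reformulation directly on the diagonal blocks of Table~\ref{mtr_linb}, and confirm the extreme case $\Gamma=\mathcal I_d$ against the closed form (\ref{eqn:detMId}) as well as the reductions underlying Theorem~\ref{thm:Gamme}.

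The main obstacle is precisely this last estimate: controlling the sign, or more sharply the undershoot, of the bivariate Bernstein interpolant $g$ at the point $\xi_{\bfm i_0}$ it does not interpolate. Unlike the univariate situation, the bivariate collocation problem is not covered by the total-positivity machinery, so the bound $g(\xi_{\bfm i_0})\ge B^d_{\bfm i_0}(\xi_{\bfm i_0})-1$ does not follow from a variation-diminishing or interpolation-error argument off the shelf. I expect that carrying the proof through will require a genuinely new sign estimate for these interpolants, exploiting the lattice structure of the domain points and the nonnegativity of the integer matrix $N_\Gamma$, or else an induction on $|\Gamma|$ that tracks the diagonal of $M_\Gamma^{-1}$ directly; this is the step I anticipate to be hard.
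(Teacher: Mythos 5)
First, note that the paper itself offers no proof of Conjecture~\ref{conj-mindet}: it is stated as an open conjecture and only verified by computer for $d\leq 7$ (the companion Conjecture~\ref{conj:lowN} for $N_\Gamma$ is proved only for $|\Gamma|\leq 2$). So there is no proof in the paper to match yours against; the question is whether your argument closes the conjecture, and it does not. Your bordering/Schur-complement algebra is correct: with $x=M_\Gamma^{-1}\bigl(B^d_{\bfm i_0}(\xi_{\bfm i})\bigr)_{\bfm i\in\Gamma}$ one indeed has $\det M_{\Gamma\cup\{\bfm i_0\}}=\det M_\Gamma\cdot s$ with $s=B^d_{\bfm i_0}(\xi_{\bfm i_0})-g(\xi_{\bfm i_0})$, and $s=1/\bigl(M_{\Gamma\cup\{\bfm i_0\}}^{-1}\bigr)_{\bfm i_0\bfm i_0}$. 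But the entire content of the conjecture is then concentrated in the unproven inequality $s\leq 1$, i.e.\ the claim that the Bernstein interpolant $g$ of $B^d_{\bfm i_0}$ on $\{\xi_{\bfm i}\}_{\bfm i\in\Gamma}$ satisfies $g(\xi_{\bfm i_0})\geq B^d_{\bfm i_0}(\xi_{\bfm i_0})-1$ at the omitted domain point. You flag this yourself as the hard step and offer no mechanism for it; since the bivariate collocation matrix is not totally positive (unlike the univariate case covered by de~Boor--deVore), no off-the-shelf sign or variation-diminishing argument applies, and nothing in the paper supplies one. As it stands the proposal is a (correct and potentially useful) reformulation, not a proof.

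Two smaller points. The statement that the conjecture ``becomes'' the reformulation \emph{every diagonal entry of $M_\Gamma^{-1}$ is at least $1$} is an over-claim: single-step monotonicity along every chain is \emph{sufficient} for the minimum to sit at $\mathcal I_d$, but not obviously necessary, since the determinant could increase at one augmentation and decrease more at a later one. Second, your argument presupposes $\det M_\Gamma>0$ for all $\Gamma$ in order to divide by it and to interpret $1/s$ as a diagonal entry of the inverse; for $d\leq 17$ this is Theorem~\ref{thm:conj17}, but for general $d$ it is exactly the still-open Conjecture~\ref{conj:main}, so your reduction is conditional on that as well. You should state explicitly that your result, even if the key estimate were established, would prove Conjecture~\ref{conj-mindet} only for those $d$ and $\Gamma$ for which positivity of all principal minors is already known.
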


%The conjecture was confirmed by a computer for $d\leq 7$.

\begin{conj}\label{conj:lowN} For $\ell\in\NN$ let
\begin{align*}
n_d:=
\left\{
\begin{array}{cl}
\ell^{3\ell}, & d=3\ell\\
(\ell+1)^{\ell+1}\ell^{2\ell}, & d=3\ell+1\\
(\ell+1)^{2\ell+2}\ell^\ell, & d=3\ell+2
\end{array}
\right..
\end{align*}
Then
\begin{equation*} %\label{eqn:conj2}
\min _{\substack{\Gamma\subset{\mathcal I_d}\\ \Gamma\neq \emptyset}} \det N_\Gamma= n_d.
\end{equation*}
\end{conj}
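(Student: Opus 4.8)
The plan is to prove the two inequalities $\min_\Gamma\det N_\Gamma\le n_d$ and $\det N_\Gamma\ge n_d$ separately, reducing everything to the smallest diagonal entry of $N_{\mathcal I_d}$. Every diagonal entry of $N_\Gamma$ is $\bfm i^{\bfm i}=i^ij^jk^k$, so a singleton gives $\det N_{\{\bfm i\}}=\bfm i^{\bfm i}$ and hence $\min_\Gamma\det N_\Gamma\le\min_{\bfm i\in\mathcal I_d}\bfm i^{\bfm i}$. I would then identify this minimum: since $t\mapsto t\ln t$ is convex, $\ln\bfm i^{\bfm i}=\sum_m i_m\ln i_m$ (with $0\ln0:=0$) is a symmetric convex, hence Schur--convex, function of $\bfm i$, so on the simplex $|\bfm i|=d$ it is minimized at the most balanced composition $\bfm i^\star$, which is majorized by every other composition. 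According to $d\bmod 3$ this is $(\ell,\ell,\ell)$, $(\ell+1,\ell,\ell)$ or $(\ell+1,\ell+1,\ell)$, and in each case $(\bfm i^\star)^{\bfm i^\star}=n_d$. This gives $\min_{\bfm i\in\mathcal I_d}\bfm i^{\bfm i}=n_d$ and the upper bound, attained at $\Gamma=\{\bfm i^\star\}$.

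For the matching lower bound I would aim at the sharper $\det N_\Gamma\ge\min_{\bfm i\in\Gamma}\bfm i^{\bfm i}$, which already yields $\det N_\Gamma\ge\min_{\bfm i\in\mathcal I_d}\bfm i^{\bfm i}=n_d$. Because $N_\Gamma$ is a principal submatrix of the block-lower-triangular matrix $N_{\mathcal I_d}$, it inherits that structure and $\det N_\Gamma$ factors as the product of the determinants of its boundary blocks (indexed by $\Gamma\cap\mathcal I_d^{(2)}$ and $\Gamma\cap\mathcal I_d^{(1)}$) and its interior block $N_{\Gamma\cap\mathcal I_d^{(0)}}$. The boundary blocks are, up to the column scaling of the $M\to N$ transformation, univariate B\'ezier collocation matrices, hence P-matrices by \cite{deBoor_deVore}, for which the bound follows from the classical product formula for $\big[x_i^{\,p_j}\big]$ with ordered positive nodes and exponents. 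It therefore suffices to prove $\det N_{\Gamma_0}\ge\min_{\bfm i\in\Gamma_0}\bfm i^{\bfm i}$ for $\Gamma_0:=\Gamma\cap\mathcal I_d^{(0)}$.

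For the interior block I would factor $c^{\,d}$ out of the row indexed by $\bfm i=(a,b,c)$, writing $N_{\Gamma_0}=\operatorname{diag}_{\bfm i}\big(c^{\,d}\big)\,V$ with $V=\big[(a/c)^{p}(b/c)^{q}\big]_{\bfm i,\bfm j}$ a genuinely bivariate generalized Vandermonde matrix whose nodes and exponents both run over $\Gamma_0$. The inductive step would then remove one index $\bfm i_n$ (not attaining the minimal diagonal), so that $\det N_{\Gamma_0}/\det N_{\Gamma_0\setminus\{\bfm i_n\}}$ equals the Schur complement $s=\bfm i_n^{\bfm i_n}-\bfm c^{T}N_{\Gamma_0\setminus\{\bfm i_n\}}^{-1}\bfm b$, and the whole problem reduces to the single inequality $s\ge1$, i.e.\ to monotonicity of $\det N_{\Gamma_0}$ under inclusion.

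I expect $s\ge1$ to be the main obstacle. It is strictly stronger than Conjecture~\ref{conj:main}: since $\operatorname{sgn}\det M_\Gamma=\operatorname{sgn}\det N_\Gamma$, the bound $\det N_\Gamma\ge n_d>0$ would establish positivity for \emph{all} $d$, which is open for $d>17$. The difficulty concentrates in the sign-regularity of the two-variable matrix $V$: unlike the one-variable case, ordered nodes and ordered exponents no longer force total positivity, so no classical product formula is available to bound $\det V$ from below. A realistic partial result is therefore to verify the lower bound computationally for $d\le17$, where Theorem~\ref{thm:conj17} already guarantees that every $N_{\Gamma_0\setminus\{\bfm i_n\}}$ is nonsingular and the Schur-complement reduction is legitimate, leaving $s\ge1$ for arbitrary $d$ as the decisive open step.
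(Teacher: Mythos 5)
You should first note that this statement is a \emph{conjecture} in the paper: the authors verify it by computer only for $d\le 7$ and prove it only for $|\Gamma|\le 2$ (via Lemma~\ref{lemma:max} and the Proposition that follows), so no complete proof exists to match yours against. Your upper-bound half is correct and self-contained: singletons give $\det N_{\{\bfm i\}}=\bfm i^{\bfm i}$, and the Schur-convexity of $\bfm i\mapsto\sum_m i_m\ln i_m$ identifies the balanced composition as the minimizer with value $n_d$. This is a cleaner route than the paper's, which locates the minimum of $g(\bfm x)=\bfm x^{\bfm x}$ by solving the normal equations at $(d/3,d/3,d/3)$ and then handling the residues of $d$ modulo $3$ by restricting to subregions $\Omega_x$. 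Your lower-bound strategy --- monotonicity of $\det N_\Gamma$ under inclusion, i.e.\ the Schur-complement inequality $s\ge 1$ --- is exactly the right generalization of what the paper actually proves: for $n=2$ it reduces to $\bfm i_1^{\bfm i_1}\bfm i_2^{\bfm i_2}-\bfm i_2^{\bfm i_1}\bfm i_1^{\bfm i_2}\ge\bfm i_1^{\bfm i_1}$, which the paper obtains from $\bfm i_2^{\bfm i_1}<\bfm i_1^{\bfm i_1}$ (Lemma~\ref{lemma:max}) together with the integrality bound $\bfm i_1^{\bfm i_2}\le\bfm i_2^{\bfm i_2}-1$. You should carry out at least this $|\Gamma|=2$ case explicitly; as written, your proposal proves strictly less than the paper's partial result, even though the needed ingredients are already in your setup.

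Two concrete gaps beyond the openly admitted one. First, your treatment of the boundary blocks is not justified: total positivity of the univariate collocation matrix gives the Hadamard--Fischer inequality $\det A\le\prod_i a_{ii}$, which points the \emph{wrong} way, and I know of no ``classical product formula'' yielding $\det\bigl[x_i^{p_j}\bigr]\ge\min_i x_i^{p_i}$ for ordered nodes and exponents; this step needs its own induction (or at least the observation that all diagonal entries $\bfm i^{\bfm i}$ are $\ge 1$, so that the block factorization only helps if each block separately satisfies the bound). Second, your fallback of verifying the lower bound computationally for $d\le 17$ is not actually available: the paper's Theorem~\ref{thm:conj17} certifies positivity of all principal minors via positive definiteness of $M_{\mathcal I_d}+M_{\mathcal I_d}^T$ \emph{without} enumerating subsets, but the quantitative bound $\det N_\Gamma\ge n_d$ would require checking exponentially many $\Gamma$, which the paper states is already infeasible for $d>7$. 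So the honest status of your proposal is: upper bound proved, lower bound proved for $|\Gamma|=1$, and a correct identification of $s\ge 1$ as the decisive (and, as you note, strictly-stronger-than-Conjecture~\ref{conj:main}) open step.
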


Conjectures~\ref{conj-mindet} and \ref{conj:lowN} were verified by a computer for $d\leq 7$. Let us prove the latter conjecture for $|\Gamma| \leq 2$ and arbitrary $d$. We will need the following lemma.

\begin{lem} \label{lemma:max}
Let $\bfm x=(x,y,z)\in\RR^3$ and fix $\bfm i=(i,j,k)\in \mathcal I_d$. Let the function $f(\bfm x):=\bfm x^{\bfm i}$ be defined on triangle
\begin{align}\label{eqn:obm}
\Omega :=\{\bfm x: \; x+y+z=d,\, 0\leq x\leq d,\, 0\leq y\leq d-x\} \subset \RR^3.
\end{align}
Then $f$ has a unique maximum at $\bfm i$ and
\begin{align*}
\max_{\bfm x\in\Omega} f(\bfm x) = \bfm i^{\bfm i}.
\end{align*}
\end{lem}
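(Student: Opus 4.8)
The plan is to maximize $f(\bfm x)=x^iy^jz^k$ on the closed triangle $\Omega$ by Lagrange multipliers, treating the single linear constraint $g(\bfm x):=x+y+z=d$. Since $f$ is continuous and $\Omega$ is compact, a maximum exists; I would first dispose of the boundary, then locate the interior critical point and verify it is the unique maximizer at $\bfm i$.

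First I would handle the degenerate cases where $f$ vanishes on parts of $\Omega$. If any of $i,j,k$ is $0$ (so $\bfm i\in\mathcal I_d^{(1)}\cup\mathcal I_d^{(2)}$), then $f$ does not depend on the corresponding variable, and the effective problem drops to one or two variables; I expect the argument below to specialize cleanly, with the convention $0^0=1$ ensuring $\bfm i^{\bfm i}$ is still the stated value. In the generic case $i,j,k\geq 1$, observe that $f\geq 0$ on $\Omega$ and $f=0$ precisely on the three edges $x=0$, $y=0$, $z=0$; hence the maximum is attained in the open interior, where the Lagrange condition applies.

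Next, on the interior I take logarithms. Setting $h:=\log f=i\log x+j\log y+k\log z$, the stationarity condition $\nabla h=\lambda\nabla g$ gives $i/x=j/y=k/z=\lambda$. Combining with the constraint $x+y+z=d$ forces $\lambda=(i+j+k)/(x+y+z)=d/d=1$, whence $x=i$, $y=j$, $z=k$. So $\bfm i$ is the unique interior critical point, and $f(\bfm i)=\bfm i^{\bfm i}$. To confirm this is the global maximum rather than merely a critical point, I would note that $h=\log f$ is strictly concave on the interior (each summand $\log$ is strictly concave, and the restriction to the affine constraint plane preserves strict concavity), so the stationary point is the unique maximizer of $h$, hence of $f$.

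I do not expect a serious obstacle here; the main point requiring care is bookkeeping around the cases with zero components, where strict concavity of $h$ on the full interior fails because $f$ vanishes on the relevant faces. The clean way to handle this uniformly is to restrict attention to the face of $\Omega$ where the support of $\bfm i$ is positive (e.g.\ if $k=0$, restrict to $z=0$, i.e.\ the segment $x+y=d$) and run the identical multiplier/concavity argument in the reduced variables; on that face $f$ is strictly concave after taking logs and the maximizer is again $\bfm i$, while off that face (where the zeroed coordinate is positive but a positively-indexed coordinate is forced smaller) $f$ is strictly smaller. An equivalent and perhaps slicker alternative is a direct weighted AM–GM estimate: writing $f(\bfm x)=\prod x^{i}$ and comparing $\prod (x/i)^{i}$ to $1$ via the weighted AM–GM inequality with weights $i/d,j/d,k/d$ yields $\prod (x/i)^{i}\le\bigl(\sum (i/d)(x/i)\bigr)^{d}=(d/d)^d=1$ with equality iff $x/i=y/j=z/k$, recovering $\bfm x=\bfm i$ and establishing both the value and the uniqueness in one stroke. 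I would likely present the AM–GM version as the cleanest route, falling back on the multiplier computation only if an explicit critical-point description is wanted.
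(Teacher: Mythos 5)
Your proof is correct, but it takes a genuinely different route from the paper's. The paper disposes of the lemma in one line: it observes that $f=\bigl(d^d/\binom{d}{\bfm i}\bigr)B_{\bfm i}^d$ on the triangle $\langle(0,0),(d,0),(0,d)\rangle$ and invokes the known fact that the Bernstein basis polynomial $B_{\bfm i}^d$ attains its unique maximum at the domain point $\xi_{\bfm i}$. Your argument is, in effect, a self-contained proof of that underlying fact, and the weighted AM--GM version is indeed the cleanest way to run it: with weights $i/d,\,j/d,\,k/d$ one gets $\prod(x/i)^{i/d}\le\sum(i/d)(x/i)=(x+y+z)/d=1$, i.e.\ $\bfm x^{\bfm i}\le\bfm i^{\bfm i}$, with the equality case recovering $\bfm x=\bfm i$. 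Your caution about zero components is warranted but resolves itself automatically in the AM--GM formulation: if, say, $k=0$, the term drops out (weight zero, $0^0=1$) and the bound becomes $\bigl((x+y)/d\bigr)^d\le1$, so equality forces both $x/i=y/j$ and $x+y=d$, hence $z=0$ and $(x,y,z)=(i,j,0)$; uniqueness survives without any separate case analysis on faces. The Lagrange-multiplier/log-concavity variant is also fine for interior $\bfm i$. What your approach buys is self-containedness and an explicit equality analysis; what the paper's buys is brevity, at the cost of leaning on a cited property of Bernstein polynomials --- one that is itself standardly established by exactly the computation you carried out.
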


\begin{proof}
Let $T=\langle(0,0),(d,0),(0,d) \rangle \subset \RR^2$ be a triangle in the domain and let us define Bernstein polynomial $B_{\bfm i}^d$ on $T$. By interpreting barycentric coordinates of $B_{\bfm i}^d$ as points in $\RR^3$,  $B_{\bfm i}^d(\bfm x/d) = \binom{d}{\bfm i}/d^d f(\bfm x)$. Since $B_{\bfm i}^d$ has a unique maximum in the barycentric point $\xi_{\bfm i}=\bfm i/d$, the proof is complete.
\end{proof}

\begin{prop}
Conjecture \ref{conj:lowN} holds true for $|\Gamma|\leq 2$.
\end{prop}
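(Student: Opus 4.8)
The plan is to split the claim into the cases $|\Gamma|=1$ and $|\Gamma|=2$, using throughout that after the reduction $M_\Gamma\to N_\Gamma$ the matrix entries are the non-negative integers $(N_\Gamma)_{\bfm i,\bfm j}=\bfm i^{\bfm j}$, and that the target value $n_d$ is exactly $\bfm i^{\bfm i}$ evaluated at the most balanced composition. For a singleton $\Gamma=\{\bfm i\}$ we have $\det N_\Gamma=\bfm i^{\bfm i}$, so the first step is to show $\min_{\bfm i\in\mathcal I_d}\bfm i^{\bfm i}=n_d$. Taking logarithms, $\ln\bfm i^{\bfm i}=\sum_\ell \phi(i_\ell)$ with $\phi(t)=t\ln t$ strictly convex; a separable convex sum is Schur-convex, hence minimized at the least majorized, i.e.\ most balanced, composition. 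Concretely, whenever two parts differ by at least $2$ the exchange $(i_p,i_q)\mapsto(i_p-1,i_q+1)$ strictly decreases the sum, by the increasing differences of $\phi$. Thus the minimizer is $(\ell,\ell,\ell)$, $(\ell+1,\ell,\ell)$, or $(\ell+1,\ell+1,\ell)$ according to $d\bmod 3$, and evaluating $\bfm i^{\bfm i}$ there reproduces the three cases of $n_d$; in particular $\bfm k^{\bfm k}\ge n_d$ for every $\bfm k\in\mathcal I_d$.

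For $\Gamma=\{\bfm i,\bfm j\}$ with $\bfm i\neq\bfm j$ we have $\det N_\Gamma=\bfm i^{\bfm i}\bfm j^{\bfm j}-\bfm i^{\bfm j}\bfm j^{\bfm i}$, and I would bound the cross term with Lemma~\ref{lemma:max}. Since every composition lies in $\Omega$, the Lemma gives $\bfm i^{\bfm j}\le\bfm j^{\bfm j}$ and $\bfm j^{\bfm i}\le\bfm i^{\bfm i}$. The decisive point is that the maximizer in Lemma~\ref{lemma:max} is \emph{unique}: because $\bfm j\neq\bfm i$ we get the strict inequality $\bfm j^{\bfm i}<\bfm i^{\bfm i}$, and as all the entries are non-negative integers this upgrades to $\bfm j^{\bfm i}\le\bfm i^{\bfm i}-1$. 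Multiplying the two non-negative bounds yields $\bfm i^{\bfm j}\bfm j^{\bfm i}\le\bfm j^{\bfm j}(\bfm i^{\bfm i}-1)$, whence
$$
\det N_\Gamma=\bfm i^{\bfm i}\bfm j^{\bfm j}-\bfm i^{\bfm j}\bfm j^{\bfm i}\ge\bfm i^{\bfm i}\bfm j^{\bfm j}-\bfm j^{\bfm j}(\bfm i^{\bfm i}-1)=\bfm j^{\bfm j}\ge n_d,
$$
the last inequality being the singleton estimate above. By symmetry in $\bfm i,\bfm j$ one in fact obtains $\det N_\Gamma\ge\max\{\bfm i^{\bfm i},\bfm j^{\bfm j}\}$.

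Putting the two cases together, every nonempty $\Gamma$ with $|\Gamma|\le 2$ satisfies $\det N_\Gamma\ge n_d$, while the value $n_d$ is attained by the singleton at the balanced composition; hence the minimum over such $\Gamma$ equals $n_d$, which is Conjecture~\ref{conj:lowN} restricted to $|\Gamma|\le 2$. I do not expect a serious obstacle here: the one place where the argument goes beyond a direct application of Lemma~\ref{lemma:max} is the integrality step passing from $\bfm j^{\bfm i}<\bfm i^{\bfm i}$ to $\bfm j^{\bfm i}\le\bfm i^{\bfm i}-1$, and this is precisely what sharpens the known positivity $\det N_\Gamma>0$ of Theorem~\ref{thm:Gamme}(1) into the quantitative bound $\det N_\Gamma\ge\bfm j^{\bfm j}$. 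The only care needed is the $0^0=1$ convention in the Schur-convexity step (so that $\phi(0)=0$ and the increasing-difference inequality still holds across integer arguments), which is routine.
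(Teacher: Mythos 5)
Your proof is correct. The $|\Gamma|=2$ half is essentially identical to the paper's: both use Lemma~\ref{lemma:max} with the uniqueness of the maximizer plus integrality to get $\bfm j^{\bfm i}\le\bfm i^{\bfm i}-1$ (the paper applies it in the form $\bfm i_1^{\bfm i_2}\le\bfm i_2^{\bfm i_2}-1$, $\bfm i_2^{\bfm i_1}<\bfm i_1^{\bfm i_1}$) and conclude $\det N_{\{\bfm i,\bfm j\}}\ge\det N_{\{\bfm j\}}$, reducing to the singleton case. Where you genuinely differ is the singleton minimization. The paper treats it as a continuous optimization problem: it minimizes $g(\bfm x)=\bfm x^{\bfm x}$ over the simplex $\Omega$, finds the interior critical point $(d/3,d/3,d/3)$, and for $d\not\equiv 0\pmod 3$ restricts to subregions such as $\Omega_x=\{\bfm x\in\Omega:\ \ell+1\le x\}$ and argues the integer minimizer sits at $(\ell+1,\ell,\ell)$ via boundary considerations --- a step the paper leaves somewhat terse. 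You instead run a purely discrete exchange argument on $\ln\bfm i^{\bfm i}=\sum_\ell\phi(i_\ell)$ with $\phi(t)=t\ln t$, showing any composition with two parts differing by at least $2$ is strictly improved by the transfer $(i_p,i_q)\mapsto(i_p-1,i_q+1)$, so the minimizer is the balanced composition. Your route is more elementary and arguably more airtight for locating the \emph{integer} minimizer, since it never needs to pass from a continuous minimum to nearby lattice points; the paper's route has the advantage of reusing the same geometric picture (Bernstein polynomials on the triangle) that underlies Lemma~\ref{lemma:max}. The one point to watch --- the $0^0=1$ convention giving $\phi(0)=0$ and preserving the strictly increasing differences of $\phi$ on the non-negative integers --- you have handled correctly.
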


\begin{proof} Let $|\Gamma| =1$ and let $g(\bfm x):=g(x,y,z):=\bfm x^{\bfm x}$ be a function, defined on the domain $\Omega$ as in (\ref{eqn:obm}). 
We are looking for
\begin{align*}
\alpha:=\min_{\bfm x\in \Omega\cap\ZZ^3} g(\bfm x).
\end{align*}

A unique local minimum of $g$ in the interior of $\Omega$ is obtained as the solution of the normal system 
$\partial g/\partial x=0,\, \partial g/\partial y =0$, and it is reached at $(d/3,d/3,d/3)$. This is a global minimum since $g(d/3,d/3,d/3)< g(\bfm x)$ for all $\bfm x$ at the boundary of $\Omega$.

If $d\equiv 0\, (\textrm{mod }3)$, then $\alpha=(d/3)^d$.

Let us examine the case $d\equiv 1 (\textrm{mod }3)$. Then $d=(\ell+1)+\ell+\ell$ for $\ell\in\NN\cup\{0\}$. By the symmetry
of the function $g$ and since at least one component of $\bfm x\in\Omega\cap \ZZ^3$ is greater or equal to $\ell+1$, it is enough to consider the case $\ell+1\leq x$ only. 

Let us define 
\begin{align*}
\Omega_x:=\{\bfm{x}=(x,y,z)\in \Omega:\;\ell+1\leq x\}.
\end{align*}
%, \quad 
%\Omega_y:=\{\bfm{x}=(x,y,z)\in \Omega;\,\ell+1\leq y\},
%$$ 
%and $\Omega_z:=\{\bfm{x}=(x,y,z)\in\Omega;\,\ell+1\leq z\}$.

Since $g$ has no extreme point in $\Omega_x$, the minimum value is reached at the boundary of $\Omega_x$. Then the minimum is $\alpha=(\ell+1)^{\ell+1}\ell^{2\ell}$ and it is achieved at $(\ell+1,\ell,\ell)$. For $\ell+1\leq y$ and $\ell+1\leq z$, the derivation is analogous.
%Similarly, the minimum in $\Omega_y$ and $\Omega_z$ is obtained at $(\ell,\ell+1,\ell)$ and $(\ell,\ell,\ell+1)$, respectively. Note that $\Omega=\Omega_x\cup\Omega_y\cup\Omega_z$.

The case $d\equiv 2\, (\textrm{mod }3)$ is similar to the previous one. Since $N_{\{\bfm i\}}=g(\bfm i)$, $\bfm i\in\mathcal I_d$, and $n_d=\alpha$, the conjecture for $|\Gamma|=1$ is proven.\\

Now let us consider the case $|\Gamma|=2$. 
Let us show that
\begin{equation*} %\label{eqn:ocena}
\bfm i_1^{\bfm i_1}=\det N_{\{\bfm i_1\}} \leq \det N_{\{\bfm i_1,\, \bfm i_2\}}=\bfm i_1^{\bfm i_1} \bfm i_2^{\bfm i_2}-\bfm i_2^{\bfm i_1} \bfm i_1^{\bfm i_2}
\end{equation*}
for every $\bfm i_1,\,\bfm i_2\in \mathcal I_d$, $\bfm i_1\neq \bfm i_2$.

By Lemma~\ref{lemma:max} it follows that $\bfm i_2^{\bfm i_1} < \bfm i_1^{\bfm i_1}$ and $\bfm i_1^{\bfm i_2} \leq \bfm i_2^{\bfm i_2}-1$, thus
\begin{align*}
\det N_{\{\bfm i_1,\, \bfm i_2\}} - \det N_{\{\bfm i_1\}} = \bfm i_1^{\bfm i_1}(\bfm i_2^{\bfm i_2}-1)-\bfm i_2^{\bfm i_1}\bfm i_1^{\bfm i_2} \geq 0.
\end{align*}
\end{proof}

One of the most straightforward approaches to prove the Conjecture~\ref{conj:main} would be by mathematical induction with respect to the number of domain points and $d$ arbitrary. For example, when $|\Gamma| \leq 2$ the conjecture is easily verified (see case 1 of Theorem~\ref{thm:Gamme}).  On the other hand, the case $|\Gamma|=3$ is considerably more complex.

To present entries in the matrix $N_\Gamma$ for $|\Gamma|=3$ better, we use a slightly different notation here. Let $n\in \NN$ and let $\Gamma:=\{(a,b,c),\,(d,e,f),\,(g,h,i)\} \subset \mathcal I_n$ be a set of three pairwise different weak 3--compositions. We would need to show that the determinant of $N_\Gamma$,
\begin{align*}
\left|
\begin{array}{ccc}
a^a b^b c^c & a^d b^e c^f & a^g b^h c^i\\
d^a e^b f^c & d^d e^e f^f & d^g e^h f^i\\
g^a h^b i^c & g^d h^e i^f & g^g h^h i^i
\end{array}
\right|
&= a^a b^b c^c \cdot d^d e^e f^f \cdot  g^g h^h i^i
- a^a b^b c^c \cdot g^d h^e i^f \cdot  d^g e^h f^i\\
&- g^a h^b i^c \cdot d^d e^e f^f \cdot  a^g b^h c^i
- d^a e^b f^c \cdot a^d b^e c^f \cdot  g^g h^h i^i\\
&+ d^a e^b f^c \cdot g^d h^e i^f \cdot  a^g b^h c^i
+ g^a h^b i^c \cdot a^d b^e c^f \cdot  d^g e^h f^i,
\end{align*}
is positive for arbitrary configurations of three elements. Since it is difficult to quantify the sizes of the entries in the determinant and to compare them for a general case, positivity of the determinant remains unproven.

\section{Conclusions and future work}

The presented results on positivity of the principal minors provide an important theoretical background for solving constrained Lagrange interpolation problems on a triangle. The latter is a significant step to construct Lagrange interpolation splines on triangulations. Although the positivity of the minors for uniformly distributed domain points remains an open problem for polynomial degree $\geq 18$, the verified properties are satisfactory for practical applications. Namely, the use of polynomials of higher degrees is not recommended due to their tendency to oscillate. 
To prove the conjecture for general $d$, our approach with positive definiteness fails since the condition is only sufficient but not necessary.

Other approaches to tackle the conjecture are by using mathematical induction or by examining some special configurations of points regardless of the polynomial dimension $d$. The latter technique is used in Theorem~\ref{thm:Gamme} where some simple configurations of points are analysed.
If $\Gamma\subset \mathcal I_d$ does not meet very strict conditions, the analysis of the determinant of $M_\Gamma$ for arbitrary $d$ is cumbersome. This can be observed from a small example from the previous section where $|\Gamma|=3$ and $d$ is arbitrary.\\

A possible future work is to try to tackle the main conjecture and newly derived conjectures from Section 3. Obtaining some partial  results, new properties of the studied collocation matrices or developing new mathematical tools to tackle the problems would also be helpful.

The Lagrange problem can be extended by studying generalised domain points. Therefore the points need not be uniformly distributed anymore. In those cases, the question of existence and uniqueness of the interpolation problem can be upgraded by studying the optimal positions of interpolation points (for example, the points that give the smallest maximum error of the interpolation polynomial).%The existence and uniqueness of the solution is only investigated for polynomial degree $\leq 4$. At higher degrees, the problem becomes considerably more complicated and many configurations of points need to be analysed. Generalised positions of interpolation points can easily be applied in some existing Lagrange interpolation scheme to provide more freedom on how to choose the interpolation parameters. In the thesis, only correctness of the problem is studied. The optimal positions of domain points, i.e., points yielding the best approximation properties, are not studied and this remains an open problem for future work.

In the paper we only investigated bivariate collocation matrices. The main conjecture can be straightforwardly generalised to trivariate case or to higher dimensions. Tackling the conjecture in higher dimension remains a possible work in the future. %Fortunately, our approach with positive definiteness is always a sufficient condition for positivity of principal minors for an arbitrary dimension (for example, results for the trivariate case are mentioned in Remark~\ref{rmk:trivar}).

%for smaller polynomial degrees $d$ the matrices can always be straightforwardly tested if they are positive definite as we have done in the trivariate case (see Remark~\ref{rmk:trivar}).

%Extension of the Lagrange problems to parametric case should also be investigated in the future. Although the constructions of functional spline interpolants could straightforwardly be implemented in the parametric case, the approach would not exploit the advantages of geometric interpolations. Namely, in the geometric interpolation problems, the number of interpolation data can be higher since the interpolation parameters are not predetermined but obtained as a solution of nonlinear equations to maximise the number of interpolation points.\\

\section{Acknowledgements}
The first author thanks ARRS for a grant P1-294. Operation part financed by the European Union, European Social Fund. We would also like to thank Simon Foucart for acquainting us with some of the properties of Bernstein matrices.

\clearpage

\end{document}